\newtheorem{thm}{Theorem}
\newtheorem{lem}{Lemma}
\newtheorem{cor}{Corollary}
\newtheorem{rem}{Remark}
\newtheorem{prop}{Proposition}
\newtheorem{problem}{Problem}
\newcommand{\nt}{{\mathbb N}}
\newcommand{\real}{{\mathbb R}}
\title{\LARGE \bf
Barrier Functionals for Output Functional Estimation of PDEs
}
\author{Mohamadreza Ahmadi, Giorgio Valmorbida, Antonis Papachristodoulou%
\thanks{The authors are with the Department
of Engineering Science, University of Oxford, Oxford,
OX1 3PJ, UK e-mail: (\{mohamadreza.ahmadi, antonis, giorgio.valmorbida\}@eng.ox.ac.uk). M. Ahmadi is supported by the Oxford Clarendon Scholarship and the Sloane-Robinson Scholarship. G. Valmorbida is also affiliated to Somerville College, University of Oxford, Oxford, U.K. A. Papachristodoulou was supported in part by the Engineering and Physical Sciences Research Council projects EP/J012041/1, EP/I031944/1
and EP/J010537/1.
}
}
\begin{document}

\maketitle
\thispagestyle{empty}
\pagestyle{empty}

\begin{abstract}

We propose a method for computing bounds on output functionals of  a class of time-dependent PDEs.  To this end, we introduce barrier functionals for PDE systems. By defining appropriate unsafe sets and optimization problems, we formulate an output functional bound estimation approach based on barrier functionals. In the case of polynomial data, sum of squares (SOS) programming is used to construct the barrier functionals and thus to compute bounds on the output functionals via semidefinite programs (SDPs). An example is given to illustrate the results.

\end{abstract}

\section{INTRODUCTION} \label{sec:intro}

A very large class of systems is described by partial differential equations (PDEs), which include derivatives with respect to both space and time. To name but a few, mechanics of fluid flows~\cite{DG95}, elastic beams~\cite{CA98}, and the magnetic flux profile in a tokamak~\cite{6426638} are all described by PDEs.

In many engineering design problems, one may merely be interested in computing a functional of the solution to the underlying PDE rather than the solution itself (see the review article~\cite{BS94} for a number of applications in structural mechanics).  The far-field pattern in electromagnetics and acoustics  \cite{MS98} and energy release rate in elasticity theory \cite{XPP06} are both functionals of the solutions to
the governing PDEs.

Perhaps a more interesting example is in fluid mechanics, i.e. lift and drag forces acting on an airfoil surrounded by a compressible flow (described by  Euler's equations)  are defined as functionals of pressure and shear forces over the surface of the airfoil~\cite{MPP99}. To illustrate, the dynamics of a compressible flow~\cite{V02} are given by
\begin{equation*}
\partial_t U+{\partial_x F}+{\partial_y G}=0,
\end{equation*}
wherein,
\begin{eqnarray}
U =  \begin{bmatrix}  \rho \\ \rho u \\ \rho v \\  E  \end{bmatrix},~F =  \begin{bmatrix}  \rho u \\ \rho u^2+p \\ \rho uv \\ u( E+p)  \end{bmatrix},~G =  \begin{bmatrix}  \rho v \\ \rho uv \\ \rho v^2+p \\ v( E+p)  \end{bmatrix}. \nonumber
\end{eqnarray}
In the above expressions, $\rho$ is the mass density, $u$ and $v$ are the gas velocities in the $x$ and $y$ directions, $p$ is the static pressure and $E$ is the total energy per unit volume. The relation among $E$, $\rho$, $p$, $u$ and $v$ is given by the ideal gas law 
$$
p= (\gamma-1)\left(E-\rho\frac{u^2+v^2}{2}\right),
$$
where  $\gamma$  is the adiabatic index. Then, the aerodynamic force $\bar{F}_A$ acting on  the airfoil $\Omega$ is given by the functional
$$
\bar{F}_A \equiv \int_{\partial \Omega} p~\bar{n} \,\, \mathrm{d}s,
$$
where  $\bar{n}$ is the unit normal vector to the surface $\partial \Omega$ of the airfoil. The lift $L$ and the drag $D$ forces are functionals defined as
$$
L=|\bar{F}_A|\sin{\theta},~D=|\bar{F}_A|\cos{\theta}
$$
where  $\theta$ is the angle between free stream flow and $\bar{F}_A$ (the angle of attack). Estimation of output functionals such as $L$ and $D$ is a very important problem in aerodynamic design. Most approaches answer this query by computing the solution, and then computing the output functional.

The ubiquity of applications like the one mentioned above has motivated the researchers into developing  computational algorithms for output functional approximation. In \cite{MPP99},  an \textit{a posteriori}
finite element method is proposed
 for estimating lower and upper bounds of output functionals for {semilinear} elliptic PDEs. In~\cite{PP98}, an augmented Lagrangian-based
 approach is proposed for calculation of lower and upper
 bounds to {linear} output functionals of coercive PDEs. In~\cite{MS98}, adjoint and defect methods for obtaining estimates of linear output
 functionals for a class of steady (time-independent) PDEs are suggested. In
 \cite{XLP03}, the authors formulate an  \mbox{\textit{a posteriori}} bound methodology for {linear} output functionals of finite
 element solutions to linear coercive PDEs. Adjoint and defect methods for computing estimates of the error in integral functionals of solutions to steady linear PDEs are discussed in~\cite{Pierce2004769}. In~\cite{Bertsimas2006}, an SDP-based bound estimation approach for linear output functionals of linear elliptic PDEs, based on the moments problem, is formulated.

 However,  most of the methods proposed to date require finite element approximations of the solution, which is susceptible to inherent discretization errors. Also, the computational burden increases as the accuracy of an
 approximated solution is improved. Furthermore, it is not clear whether an attained bound from finite element approximations on the output
 functionals is an upper or lower bound estimate. Consequently, we need certificates to corroborate and verify an obtained bound~(see~\cite{Sauer-budge_computingbounds,XPP06,Pars2006406} for  finite element based methods with certificates for linear/quadratic output functionals of steady linear elliptic PDEs). We show that one approach to certify an obtained bound is through the use of barrier certificates.

 Barrier certificates \cite{P06} were first introduced for model invalidation of ordinary differential equations (ODEs) with polynomial vector fields and have been used to address safety
  verification of nonlinear and hybrid systems~\cite{PJP07},  safety verification
  of a life support system~\cite{GSAGLP07}, and reachability analysis of complex biological networks~\cite{EFLPP06}.
    Moreover, compositional barrier certificates and converse results were studied in \cite{SWP12} and \cite{WC13}, respectively.

 This paper proposes a framework to compute bounds on output functionals of a class of time-dependent PDEs using SDPs, without the need to approximate the solutions. We generalize the result in~\cite{P06} to PDE systems by introducing Barrier Functionals. We show how  different output functionals can be converted into the functional structure suitable for the formulations given in this paper in terms of integral inequalities. The integral inequalities are then solved using the results  in~\cite{VAP14} which have been applied in~\cite{AVP14} for solving dissipation inequalities for PDEs.  For the case of polynomial PDEs and polynomial output functionals (in both dependent and independent variables), SOS programming can be used to construct the barrier functionals and therefore to compute upper bounds. This reduces the problem to solving SDPs.
The proposed upper bound estimation method  is illustrated with an   example.

  The rest of the paper is organized as follows. In the next section, we  give a motivating
  example 
  and formulate the problem under study. In Section~\ref{sec:integinq}, we briefly discuss the method developed in~\cite{VAP14} for studying integral inequalities based on SDPs.
   Section~\ref{sec:barrier} considers the bound estimation method using barrier functionals. In Section~\ref{sec:exam}, we illustrate the proposed results using an example. Finally, Section~\ref{sec:conclusions} concludes the paper and gives directions for future research.


\textbf{Notation:}


The $n$-dimensional Euclidean space is denoted by $\mathbb{R}^n$ and the space of nonnegative reals by $\mathbb{R}_{\ge0}$. The $n$-dimensional space of positive integers is denoted by $\mathbb{N}^n$, and the $n$-dimensional space of non-negative integers is denoted by $\mathbb{N}^n_0$. The set of symmetric $n\times n$ matrices by $\mathbb{S}^n$. 
 The notation $M^\prime$ denotes the transpose of matrix $M$. 
A domain $\Omega$ is a subset of $\mathbb{R}$, and $\overline{\Omega}$ is the closure of set $\Omega$. The boundary $\partial \Omega$ of set $\Omega$ is defined as $\overline{\Omega} \setminus \Omega$ with $\setminus$ denoting set subtraction.
The space of $k$-times continuous differentiable functions defined on $\Omega$ is denoted by $\mathcal{C}^k(\Omega)$. For a multivariable function $f(x,y)$, we use the notation $f\in \mathcal{C}^k[x]$ to show $k$-times continuous differentiability of $f$ with respect to variable $x$.  If $p\in \mathcal{C}^1(\Omega)$, then $\partial_x p$ denotes the derivative of $p$ with respect to variable $x \in \Omega$, i.e. $\partial_x :=\frac{\partial}{\partial x}$.
In addition, we adopt Schwartz's  multi-index notation. For $u \in \mathcal{C}^{\alpha}(\Omega)$, $\alpha \in \nt^n_0$, define
$$
D^{\alpha}u := \left(u_1,\partial_x u_1,  \ldots, {\partial_x^{\alpha_1}{u_{1}}},  \ldots, u_n,\partial_x u_n, \ldots, \partial_x^{\alpha_n} u_n \right).
$$
We denote the ring of polynomials with real coefficients by $\mathcal{R}[x]$,  and the ring of polynomials with a sum-of-squares decomposition by $\Sigma[x] \subset \mathcal{R}[x]$. A polynomial $p(x)\in \Sigma[x]$ if $\exists p_i(x) \in \mathcal{R}[x]$, $i \in \{1, \ldots, n_d\}$ such that $p(x) = \sum_i^{n_d} p_i^2(x)$. Hence, $p(x)$ is clearly non-negative. The set of polynomials~$\{p_i\}_{i=1}^{n_d}$ is called \emph{SOS decomposition} of $p(x)$. The converse does not hold in general, that is, there exist non-negative polynomials which do not have an SOS decomposition~\cite{Par00}.  The test whether  an SOS decomposition exists for a given polynomial can be cast as an SDP (see~\cite{CLR95,Par00,CTVG99}).

\section{Motivating Example and Problem Formulation} \label{sec:mot}

Next, we present a motivating example that is referred to throughout the paper.

\subsection{Motivating Example:}

The heat distribution over a heated rod is described by
\begin{equation} \label{eq:exampsys}
\partial_t u = k\partial_x^2 u + f(t,x,u), \quad x \in \Omega,~t>0
\end{equation}
where $\Omega = [0,1]$, $k>0$ is the thermal conductivity, and $f(t,x,u)$ is the forcing, representing either a heat sink or a heat source.
The initial heat distribution is $u(0,x)=u_0(x)$. We are interested in estimating bounds on the heat flux emanating from the boundary $x = 0$; i.e., the time dependent quantity
\begin{equation} \label{eq:motexam2}
y(t) = k  \partial_x u(t,0),~t>0.
\end{equation}
The available approaches for finding bounds on \eqref{eq:motexam2} rely on methods for approximating the solution to~\eqref{eq:exampsys} and then computing \eqref{eq:motexam2}. In addition, some existing methods  require  convexity  of the output functional $y(t)$. 

\subsection{Problem Formulation:}

Consider the class of PDE systems governed by
\begin{eqnarray} 
\partial_t u(t,x) &=& F(t,x,D^\alpha u(t,x)), \quad x \in \Omega,~t>0 \label{eq1} \\
y(t) &=& \mathcal{G}u,~t\ge 0 \label{eq:functional}
\end{eqnarray}
subject to $u(0,x) = u_0(x)$ and boundary conditions given by
\begin{equation} \label{eqbc}
Q \begin{bmatrix} D^{\alpha-1}u(t,1) \\ D^{\alpha-1}u(t,0) \end{bmatrix} = 0
\end{equation}
with $Q$ being a matrix of appropriate dimension and \mbox{$F \in \mathcal{R}[t,x,D^\alpha u]$}. 
We assume $\Omega = [0,1]$\footnote{Remark that any bounded domain on the real line can be mapped to [0,1] using an appropriate change of variables.}. The output functional~\eqref{eq:functional} is defined by the operator  $\mathcal{G}$ which is of the form
\begin{multline}
\mathcal{G} u = G_1\left(t,D^\beta u(t,x)\right) \\
+ \int_0^t G_2\left(\tau,D^\beta u(\tau,x)\right) \,\,\mathrm{d}\tau,~x\in \overline{\Omega},~t>0, \label{eqG}
\end{multline}
wherein, $\{G_i\}_{i=1,2}$ are given by
\begin{multline}\label{eg1}
G_i(t,D^\beta u) =  g_1(t,x,D^\beta u(t,x)) \\
+ \int_{\tilde{\Omega}} g_2(t,\theta,D^\beta u(t,\theta)) \,\, \mathrm{d}\theta,\\ ~x \in \overline{\Omega},~t>0,~i=1,2
\end{multline}
with $g_{i} \in \mathcal{R}[t,x,D^\beta u],~i=1,2$ and $\tilde{\Omega}\subseteq \Omega$. In this study, we discuss the cases where either $G_1=0$ or $G_2=0$. The functional given by~\eqref{eq:functional},~\eqref{eqG}, and~\eqref{eg1} represents an output functional either evaluated  
\begin{itemize}
\item [$A.$] at a single point inside the domain ($g_2=0$),
\item [$B.$] over a subset of the domain ($g_1=0$ and $\tilde{\Omega}\subset \Omega$)
\item [$C.$] over the whole domain ($g_1=0$ and $\tilde{\Omega}=\Omega$).
\end{itemize}


The problem we want to solve can be stated as follows. 

\begin{problem}\label{prob:prob1} Given PDE~\eqref{eq1} with initial condition~$u_0 \in \mathcal{U}_0$ and boundary conditions~\eqref{eqbc}, and a scalar $T \ge 0$, compute $\gamma \in \mathbb{R}$ such that $y(T) \le \gamma$, where $y$ is given in~\eqref{eq:functional}.
\end{problem} 


\section{Integral Inequalities} \label{sec:integinq}

We propose a method to solve Problem~\ref{prob:prob1} which requires the solution of integral inequalities. This section briefly presents the results of~\cite{VAP14}, in which, conditions for the verification of integral inequalities, defined in a bounded interval, were proposed. These conditions are obtained by considering a quadratic-like representation of the integrand and  differential relations among the dependent variables. As a result, the positivity of the integral is checked via the positivity of a matrix function, describing the quadratic form in the integrand, over the domain of integration. The conditions and the main steps for their derivation are presented below. 

Consider the following inequality
\begin{multline}\label{eq:intineq}
\mathcal{F} = \int_0^1  (D^{\alpha}u)^\prime F(t,x) (D^{\alpha}u) \,\, \mathrm{d}x \\
-\left[(D^{\alpha-1}u(t,1))^\prime F_1(t) (D^{\alpha-1}u(t,1))\right. \\~~~~~~~~~~~~~~~~ \left.- (D^{\alpha-1}u(t,0))^\prime F_0(t) (D^{\alpha-1}u(t,0))\right]\geq 0 .
\end{multline}
with $F: \real_{\geq 0 } \times [0,1]  \rightarrow \mathbb{S}^{n_\alpha}$, $n_\alpha = \sum_{i = 1}^{n} \alpha_i$, \mbox{$F_i(t): \real_{\geq 0 }   \rightarrow \mathbb{S}^{n_{\alpha-1}}$}, $n_{\alpha-1} = \sum_{i = 1}^{n} (\alpha_i - 1)$, $i = 0,1$  and  the dependent variable $u$ satisfies
\begin{equation}\label{eq:usubspace}
u \in \mathcal{U}_s(Q):=\left\lbrace u \mid Q \left[ \begin{array}{c} D^{\alpha-1}u(t,1) \\ D^{\alpha-1}u(t,0) \end{array} \right] = 0 \right\rbrace.
\end{equation}

In the following, we show how to account for~\eqref{eq:usubspace} when solving~\eqref{eq:intineq}. The lemma below establishes a relation between the values at the boundary $u(t,1)$ and $u(t,0)$ and the integrand and is a straightforward application of the Fundamental Theorem of Calculus. It will be used to introduce extra terms in the integral in~\eqref{eq:intineq}.

\begin{lem}
Consider a matrix function $H(t,x) \in \mathcal{C}^1[x]$, $H : \real_{\geq 0} \times[0,1] \rightarrow \mathbb{S}^{n_{\alpha-1}}$. We have
\begin{multline}
\label{eq:FTC}
\int_0^1   \frac{\mathrm{d}}{\mathrm{d}x} \left[ (D^{\alpha-1}u)^\prime H(t,x) (D^{\alpha-1}u) \right]\,\, \mathrm{d}x  \\
\begin{array}{ll}
= &\int_0^1   (D^{\alpha-1}u)^\prime   \frac{\partial H(t,x)}{\partial x} (D^{\alpha-1}u) \\& ~~~~~~+2  (D^{\alpha-1}u)^\prime  H(t,x) (D^{\alpha}u)   \,\, \mathrm{d}x \\
= &(D^{\alpha-1}u(t,1))^\prime H(t,1) (D^{\alpha-1}u(t,1)) \\ &~~~~~~ - (D^{\alpha-1}u(t,0))^\prime H(t,0) (D^{\alpha-1}u(t,0)).
\end{array}
\end{multline}
\end{lem}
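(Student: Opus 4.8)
The plan is to treat $t$ as a fixed parameter and reduce the claim to a one-variable calculus identity in $x$. I would write $v(x) := D^{\alpha-1}u(t,x)$, an $\mathbb{R}^{n_{\alpha-1}}$-valued function of $x$; under the standing smoothness hypotheses on $u$, every entry of $v$ is a partial $x$-derivative of $u$ of order at most $\alpha_i-1$, so $v$ is $\mathcal{C}^1$ in $x$, and since $H(t,\cdot)\in\mathcal{C}^1[x]$ the scalar map $x\mapsto v(x)'H(t,x)v(x)$ is $\mathcal{C}^1$ on $[0,1]$.

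Next I would expand the left-hand integrand by the product rule,
$$
\frac{\mathrm{d}}{\mathrm{d}x}\big[v'Hv\big] = v'\,\frac{\partial H}{\partial x}\,v + (\partial_x v)'Hv + v'H(\partial_x v),
$$
and use that $H$ is symmetric, so the scalar $(\partial_x v)'Hv$ equals its transpose $v'H(\partial_x v)$ and the last two terms combine into $2\,v'H(\partial_x v)$. I would then identify $\partial_x v$ with $D^\alpha u$: differentiating $v=D^{\alpha-1}u$ entrywise sends each block $(u_i,\partial_x u_i,\dots,\partial_x^{\alpha_i-1}u_i)$ to $(\partial_x u_i,\dots,\partial_x^{\alpha_i}u_i)$, i.e. precisely the entries of $D^\alpha u$ that pair with $v$ in the bilinear form $(D^{\alpha-1}u)'H(D^\alpha u)$. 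Integrating over $[0,1]$ gives the first equality; the second is then the Fundamental Theorem of Calculus applied to the $\mathcal{C}^1$ function $x\mapsto v(x)'H(t,x)v(x)$, yielding $v(1)'H(t,1)v(1)-v(0)'H(t,0)v(0)$, which is exactly the right-hand side rewritten in the $D^{\alpha-1}u$ notation.

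I do not anticipate a real difficulty: everything is the product rule plus the Fundamental Theorem of Calculus. The only points that need care are the index bookkeeping — making the shift $\partial_x(D^{\alpha-1}u)\leftrightarrow D^\alpha u$ precise so that the dimensions in $(D^{\alpha-1}u)'H(D^\alpha u)$ are consistent with the multi-index convention — and recording the regularity assumptions ($u$ smooth enough that $D^{\alpha-1}u$ is $\mathcal{C}^1$ in $x$, and $H\in\mathcal{C}^1[x]$) under which the two classical results apply.
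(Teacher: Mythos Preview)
Your proposal is correct and matches the paper's approach exactly: the paper itself does not spell out a proof but simply states that the lemma ``is a straightforward application of the Fundamental Theorem of Calculus,'' which is precisely the product-rule-plus-FTC argument you outline. Your flagging of the index bookkeeping (that $\partial_x(D^{\alpha-1}u)$ is not literally $D^\alpha u$ but a subvector of it, so the expression $(D^{\alpha-1}u)'H(D^\alpha u)$ is a mild notational abuse) is apt and is the only point requiring care.
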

\smallskip

In order to write terms in \eqref{eq:FTC} in a compact form, define the matrix function $\bar{H}(x) \in \mathcal{C}^1[x]$, $\bar{H} : \real_{\geq 0} \times[0,1] \rightarrow \mathbb{S}^{n_{\alpha}}$ to be the matrix satisfying
\begin{multline} \label{eq:Hbar}
(D^{\alpha}u)^\prime \bar{H}(t,x) (D^{\alpha}u)  \\ :=  (D^{\alpha-1}u)^\prime \left[ \frac{\partial H(t,x)}{\partial x} (D^{\alpha-1}u)+2  H(t,x) (D^{\alpha}u) \right].
\end{multline}
Therefore, \eqref{eq:FTC} gives
\begin{multline}
0 =  \int_0^1    (D^{\alpha}u)^\prime \bar{H}(t,x) (D^{\alpha}u)    \,\, \mathrm{d}x  \\
-\left[(D^{\alpha-1}u(t,1))^\prime H(t,1) (D^{\alpha-1}u(t,1))\right. \\ \left.- (D^{\alpha-1}u(t,0))^\prime H(t,0) (D^{\alpha-1}u(t,0))\right],
\end{multline}
which can be added to \eqref{eq:intineq} to give
\begin{multline}
\label{eq:Fmod}
\mathcal{F} = \int_0^1   (D^{\alpha}u)^\prime \left[ F(t,x) + \bar{H}(t,x)\right] (D^{\alpha}u) \,\, \mathrm{d}x \\ -\left[(D^{\alpha-1}u(t,1))^\prime  \left( H(t,1) + F_1(t) \right) (D^{\alpha-1}u(t,1))\right. \\ \left.- (D^{\alpha-1}u(t,0))^\prime \left( H(t,0) + F_0(t)\right) (D^{\alpha-1}u(t,0))\right].
\end{multline}
With the above expression we can then formulate conditions to verify inequality \eqref{eq:intineq} for $u$ satisfying \eqref{eq:usubspace} as follows. Let~${T}\in \mathbb{R}_{\ge0}$.

\begin{prop}
If
\begin{equation}
\label{eq:propMatrix}
F(t,x)+\bar{H}(t,x) \ge 0,~\forall t \in [0,{T}],~x \in [0,1],
\end{equation}
and
\begin{multline}
\label{eq:propBoundaries}
(D^{\alpha-1}u(t,1))^\prime \left( H(t,1) + F_1(t) \right)  (D^{\alpha-1}u(t,1)) \\- (D^{\alpha-1}u(t,0))^\prime \left( H(t,0) + F_0(t)\right) (D^{\alpha-1}u(t,0)) \le 0, \\
\forall u \in \mathcal{U}_s(Q)
\end{multline}
then $\mathcal{F} \ge 0$ for all $u \in \mathcal{U}_s(Q)$ and $t \in  [0,{T}]$.
\end{prop}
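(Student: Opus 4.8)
The plan is to combine the two hypotheses with the rewriting of $\mathcal{F}$ given in \eqref{eq:Fmod}. First I would observe that the identity \eqref{eq:Fmod} holds for \emph{every} admissible $u$ and every $t$, since it was derived from \eqref{eq:intineq} by adding the quantity $0 = \int_0^1 (D^\alpha u)' \bar H(t,x)(D^\alpha u)\,\mathrm{d}x - [(D^{\alpha-1}u(t,1))' H(t,1)(D^{\alpha-1}u(t,1)) - (D^{\alpha-1}u(t,0))' H(t,0)(D^{\alpha-1}u(t,0))]$, which is itself an identity coming from the Fundamental Theorem of Calculus (the Lemma) together with the definition \eqref{eq:Hbar} of $\bar H$. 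So no loss of generality is incurred: proving $\mathcal{F}\ge 0$ is equivalent to showing that the right-hand side of \eqref{eq:Fmod} is nonnegative.

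Next, fix $u \in \mathcal{U}_s(Q)$ and $t \in [0,T]$. For the integral term in \eqref{eq:Fmod}, hypothesis \eqref{eq:propMatrix} says $F(t,x)+\bar H(t,x)\succeq 0$ as a symmetric matrix for all $x\in[0,1]$; hence the integrand $(D^\alpha u(t,x))'\,[F(t,x)+\bar H(t,x)]\,(D^\alpha u(t,x))$ is pointwise nonnegative in $x$, and therefore its integral over $[0,1]$ is nonnegative. For the boundary bracket in \eqref{eq:Fmod}, hypothesis \eqref{eq:propBoundaries} says precisely that
$$
(D^{\alpha-1}u(t,1))'\bigl(H(t,1)+F_1(t)\bigr)(D^{\alpha-1}u(t,1)) - (D^{\alpha-1}u(t,0))'\bigl(H(t,0)+F_0(t)\bigr)(D^{\alpha-1}u(t,0)) \le 0
$$
for all $u \in \mathcal{U}_s(Q)$, so the term $-[\,\cdots\,]$ subtracted in \eqref{eq:Fmod} is nonnegative. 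Adding the two nonnegative contributions gives that the right-hand side of \eqref{eq:Fmod}, hence $\mathcal{F}$, is $\ge 0$ for all $u \in \mathcal{U}_s(Q)$ and all $t \in [0,T]$, which is the claim.

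There is no serious obstacle here: the proposition is essentially an immediate corollary of the algebraic rearrangement already carried out before its statement, so the argument is a one-line "both pieces are nonnegative, therefore the sum is." The only point that deserves a word of care is the quantifier bookkeeping on the boundary term — \eqref{eq:propBoundaries} is assumed for \emph{all} $u$ satisfying the constraint \eqref{eq:usubspace}, which is exactly what is needed, so one should just note that the particular $u$ fixed at the start of the argument lies in $\mathcal{U}_s(Q)$ and hence the hypothesis applies to it; likewise one should record that $\bar H$ is well defined (the matrix in \eqref{eq:Hbar} exists because the quadratic form on the right is a quadratic form in the entries of $D^\alpha u$, the lower-order entries $D^{\alpha-1}u$ being a subvector of $D^\alpha u$). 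With these remarks in place the proof is complete.
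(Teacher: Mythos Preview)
Your proof is correct and is exactly the intended argument: use the identity \eqref{eq:Fmod} and observe that hypothesis \eqref{eq:propMatrix} makes the integral term nonnegative while hypothesis \eqref{eq:propBoundaries} makes the subtracted boundary bracket nonpositive. The paper itself does not spell this out but simply refers the reader to~\cite{VAP14}, so your write-up is in fact more complete than what appears in the text.
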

\begin{proof}
Refer to~\cite{VAP14}.
\end{proof}
\begin{rem}
As outlined in the beginning of this section, the above results convert the test of~\eqref{eq:intineq} into the test of positivity of the matrix $F(t,x)+\bar{H}(t,x)$ over the domain $x \in [0,1]$ for all $t \in [0,T]$. Moreover, the test is performed for the set of dependent variables belonging to a subspace of a Hilbert space defined by $\mathcal{U}_s(Q)$ as in \eqref{eq:usubspace}. Notice that~\eqref{eq:propMatrix} and \eqref{eq:propBoundaries} are related via matrix $H$ (which defines the entries of $\bar{H}$).
\end{rem}

We transform  output functionals $A$-$B$ to the output functional structure $C$, which we refer as \textit{full integral form} in the sequel. This structure is  consistent with the method for solving integral inequalities outlined in this section. The transformation methods are discussed in Appendix~\ref{sec:full}.

\section{Barrier Functionals} \label{sec:barrier}

 We first recall some results on barrier certificates for ODE systems. Consider the following ODE system
 \begin{eqnarray} \label{eq:odesys}
 \dot{x} = f(t,x), \quad t>0,~x \in \mathcal{X} \subset \mathbb{R}^n,
 \end{eqnarray}
 subject to $x(0)=x_0 \in \mathcal{X}_0 \subset \mathcal{X}$, where \mbox{$f:[0,\infty)\times \mathbb{R}^n \to \mathbb{R}^n$}. The (unsafe) set at time $T$ is denoted by $\mathcal{X}_T \subset \mathcal{X}$.

 \begin{thm}[Theorem 2 in \cite{P06}]
 Let $\mathcal{X}_0,\mathcal{X}_T \subset \mathcal{X}$, and $T>0$. Consider the ODE system described by \eqref{eq:odesys}. If there exists a function $B(t,x)\in \mathcal{C}^1[t,x]$ such that the following conditions hold
 \begin{multline}
 B(T,x(T))-B(0,x_0)>0, \\ \forall x(T)\in \mathcal{X}_T,~\forall x_0\in \mathcal{X}_0,
 \end{multline}
 \begin{multline}
(\partial_x B) f(t,x)+\partial_t B \le 0, \quad \forall t\in [0,T],~\forall x\in \mathcal{X},
 \end{multline}
then there is no solution $x(t)$ of \eqref{eq:odesys} such that $x(0) \in \mathcal{X}_0$ and $x(T)\in \mathcal{X}_T$.
 \end{thm}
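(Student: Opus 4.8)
The plan is to prove this by contradiction, exactly mirroring the standard barrier certificate argument for ODEs. Suppose, for the sake of contradiction, that there does exist a solution $x(t)$ of \eqref{eq:odesys} with $x(0) = x_0 \in \mathcal{X}_0$ and $x(T) \in \mathcal{X}_T$. The idea is to track the scalar function $t \mapsto B(t,x(t))$ along this trajectory and show that the two hypotheses force incompatible conclusions about its net change from $t=0$ to $t=T$.

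First I would observe that since $B \in \mathcal{C}^1[t,x]$ and $x(\cdot)$ is a (continuously differentiable) solution of the ODE, the composition $b(t) := B(t,x(t))$ is $\mathcal{C}^1$ on $[0,T]$, and by the chain rule
\begin{equation*}
\frac{\mathrm{d}}{\mathrm{d}t} b(t) = \partial_t B(t,x(t)) + (\partial_x B(t,x(t)))\,\dot{x}(t) = \partial_t B(t,x(t)) + (\partial_x B(t,x(t)))\, f(t,x(t)).
\end{equation*}
By the second hypothesis, evaluated at the point $x(t) \in \mathcal{X}$ for each $t \in [0,T]$, the right-hand side is $\le 0$, so $b$ is non-increasing on $[0,T]$. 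Integrating (or simply invoking monotonicity) gives $b(T) - b(0) \le 0$, i.e.
\begin{equation*}
B(T,x(T)) - B(0,x_0) \le 0.
\end{equation*}

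Next I would invoke the first hypothesis: since $x_0 \in \mathcal{X}_0$ and $x(T) \in \mathcal{X}_T$ by our contradiction assumption, it yields $B(T,x(T)) - B(0,x_0) > 0$. This directly contradicts the inequality just derived, so no such solution can exist, completing the proof.

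There is really no hard part here — the argument is short and self-contained. The only points requiring a modicum of care are (i) confirming that the trajectory stays in $\mathcal{X}$ for all $t \in [0,T]$ so that the differential inequality hypothesis applies pointwise along the solution (this is implicit in the statement "$x(t)$ of \eqref{eq:odesys}" together with the constraint $x \in \mathcal{X}$ in the ODE's definition), and (ii) the regularity bookkeeping that makes the chain rule legitimate, namely $B \in \mathcal{C}^1[t,x]$ and $x(\cdot)$ differentiable. Everything else is the elementary fact that a $\mathcal{C}^1$ function with non-positive derivative cannot strictly increase over an interval. The genuine novelty of the paper lies not in this recalled ODE theorem but in its functional-analytic generalization to PDEs in the sections that follow.
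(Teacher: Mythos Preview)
Your proof is correct. Note, however, that the paper does not actually prove this statement---it is quoted verbatim from~\cite{P06} without proof---so there is no ``paper's own proof'' to compare against. That said, your contradiction argument is precisely the one the paper uses to prove its PDE generalization, Theorem~\ref{thm1}: assume a solution reaches the unsafe set, integrate the total time derivative of $B$ along the trajectory using the second hypothesis to get $B(T,\cdot)-B(0,\cdot)\le 0$, and contradict the first hypothesis. So your approach is fully aligned with the paper's methodology.
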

\begin{figure}
  \centering
\begin{tikzpicture}[scale=0.7, every node/.style={scale=0.7}]
\path[fill=blue!10] (0,0)--(0,1.5) to [out=10,in=220] (8,5.5) --(8,0);
\draw[very thick,<->] (8,0) node[font=\fontsize{13}{58}\sffamily\bfseries,right]{$t$} -- (0,0) node[font=\fontsize{13}{58}\sffamily\bfseries,left ]{$0$} -- (0,7.5) node[font=\fontsize{13}{58}\sffamily\bfseries,above]{$x(t)$};
\draw[dashed,red, ultra thick] (0,2) to [out=1,in=180] (6,6) --(8,6);
\draw[ultra thick,blue] (4,6)--(4,7.5) node[font=\fontsize{12}{58}\sffamily\bfseries,left ]{$\mathcal{X}_T$};
\draw[very thick,loosely dashed] (4,6)--(4,0) node[font=\fontsize{9}{58}\sffamily\bfseries,below ]{$T$};
\draw[ultra thick, blue] (0,0) --(0,1.5)node[font=\fontsize{12}{58}\sffamily\bfseries,left ]{$\mathcal{X}_0$};
\draw [red] (6,5) node[font=\fontsize{9}{58}\sffamily\bfseries,below ]{$B(T,x)<B(0,x)$};
\draw[red] (2,5) node[font=\fontsize{9}{58}\sffamily\bfseries,above ]{$B(T,x)>B(0,x)$};
\end{tikzpicture}
  \caption{Illustration of the barrier function for ODE systems. For any solution $x(t)$ starting in the set $\mathcal{X}_0$ (shown by shaded blue color), $x(T) \notin \mathcal{X}_T$. }\label{tikzfig1}
\end{figure}
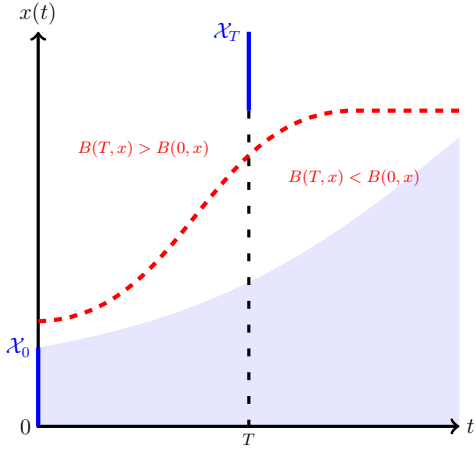

 \begin{rem}
 The level sets of $B(T,x)$ for fixed \mbox{$T>0$} represent barrier surfaces in the $\mathcal{X}$ space separating $\mathcal{X}_0$ and $\mathcal{X}_T$ such that no trajectory of \eqref{eq:odesys} starting from $\mathcal{X}_0$ enters $\mathcal{X}_T$ at time~$T$. This is illustrated in Figure~\ref{tikzfig1} for a single state ODE system.
 \end{rem}

 For PDE systems,  we are interested in finding barrier certificates to check whether the output functional  $y$ as in~\eqref{eq:functional} satisfies $y(T) \le \gamma$ for some $\gamma >0$ and $T>0$, e.g., $y(T)=k\partial_x u(T,0)$ in the motivating example of Section~\ref{sec:mot}. Let $\mathcal{U}_T = \left \{ u \mid y(T) > \gamma\right\}$. The set $\mathcal{U}_T$ defines a subset of function spaces.  At this point, we observe that checking whether $y(T) \le \gamma$ can be performed via an invalidation or safety verification method. The key step is to find certificates that there is no solution $u(t,x)$ to \eqref{eq1} starting at $u_0(x) \in \mathcal{U}_0$ such that $u(T,x) \in \mathcal{U}_T$. The next theorem asserts that  barrier functionals can be used as certificates for upper bounds on output functionals.


\begin{thm} \label{thm1}
Consider the PDE system described by~\eqref{eq1} subject to boundary conditions~\eqref{eqbc} and initial condition $u_0(x)\in \mathcal{U}_0 \subset \mathcal{U} \subseteq \mathcal{
U}_S(Q)$, where $\mathcal{U}_S(Q)$ is defined in~\eqref{eq:usubspace}.  Assume $u \in \mathcal{U} \subseteq \mathcal{U}_S(Q)$. Let
\begin{multline} \label{eq:unsafeset}
\mathcal{U}_T = \bigg\{ u \in  \mathcal{U} \mid \\ y(T)=\int_0^1 g(T,x,D^\beta u(T,x)) \,\, \mathrm{d}x > \gamma \bigg\},\end{multline}
with $\beta>0$, define the unsafe set. If there exists a barrier functional $B(t,D^\beta u) \in \mathcal{C}^1[t,D^\beta u]$,
 such that the following conditions hold
\begin{multline} \label{con1}
B(T,D^\beta u(T,x)) - B(0,D^\beta u_0(x)) >0, \\ \forall u(T,x) \in \mathcal{U}_T,~\forall u_0 \in \mathcal{U}_0
\end{multline}
\begin{equation} \label{con2}
(\partial_{ D^\beta u}B) D^\beta F(t,x,D^\alpha u) +\partial_t B \le 0, \quad \forall t\in[0,T],~\forall u \in \mathcal{U},
\end{equation}
then it follows that there is no solution $u(t,x)$ of \eqref{eq1} such that $u(0,x)=u_0(x) \in \mathcal{U}_0$ and $u(T,x) \in \mathcal{U}_T$ for $T>0$. In other words, it holds that~\mbox{$y(T) \le \gamma$}.
\end{thm}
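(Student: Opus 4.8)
The plan is to argue by contradiction, following the structure of the proof of Theorem~2 in~\cite{P06}, with the ordinary time derivative of $B$ along the ODE flow replaced by the time derivative of a functional evaluated along the PDE flow. Suppose there were a solution $u(t,x)$ of~\eqref{eq1} satisfying the boundary conditions~\eqref{eqbc}, with $u(0,x)=u_0(x)\in\mathcal{U}_0$ and $u(T,x)\in\mathcal{U}_T$. Along this solution, define the scalar function $b(t):=B(t,D^\beta u(t,x))$ for $t\in[0,T]$. The proof then reduces to showing that $b$ is non-increasing on $[0,T]$ and that this contradicts~\eqref{con1}.

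For the monotonicity step, differentiate $b$ in $t$. Since $B\in\mathcal{C}^1[t,D^\beta u]$ and the solution is assumed regular enough that $D^\beta u$ is continuously differentiable in $t$ --- so that $\partial_t$ may be passed through the spatial integrals in~\eqref{eqG}--\eqref{eg1} and through the spatial derivatives collected in $D^\beta$ --- the chain rule gives $\dot b(t)=\partial_t B+(\partial_{D^\beta u}B)\,\partial_t(D^\beta u)$. Commuting spatial and temporal derivatives on the solution and using~\eqref{eq1}, $\partial_t(D^\beta u)=D^\beta(\partial_t u)=D^\beta F(t,x,D^\alpha u)$, so $\dot b(t)=(\partial_{D^\beta u}B)\,D^\beta F(t,x,D^\alpha u)+\partial_t B$. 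Because $u(t,\cdot)\in\mathcal{U}$ for all $t$ along the solution, condition~\eqref{con2} yields $\dot b(t)\le 0$ on $[0,T]$, hence $b(T)\le b(0)$, i.e. $B(T,D^\beta u(T,x))\le B(0,D^\beta u_0(x))$.

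It remains to invoke~\eqref{con1}. Since $u(T,x)\in\mathcal{U}_T$ and $u_0\in\mathcal{U}_0$, that condition forces $B(T,D^\beta u(T,x))-B(0,D^\beta u_0(x))>0$, contradicting the inequality just obtained. Therefore no solution can both start in $\mathcal{U}_0$ and lie in $\mathcal{U}_T$ at time $T$; in particular the actual solution emanating from $u_0\in\mathcal{U}_0$ has $u(T,x)\notin\mathcal{U}_T$, which by the definition~\eqref{eq:unsafeset} of the unsafe set is precisely $y(T)\le\gamma$. This is the PDE analogue of the separating-barrier picture in Figure~\ref{tikzfig1}.

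The main obstacle is the monotonicity step: making the chain rule for $B(t,D^\beta u(t,\cdot))$ rigorous, i.e. justifying differentiation under the integral sign in the functional $B$ and the interchange of $\partial_t$ with the spatial derivatives in $D^\beta$. This requires the solution $u$ to have sufficient joint regularity in $(t,x)$; we take this as a standing well-posedness assumption on the class~\eqref{eq1} (under polynomial data it is the natural hypothesis). Once the chain rule is available, the monotonicity of $b$ and the contradiction with~\eqref{con1} are routine, exactly as in the ODE case.
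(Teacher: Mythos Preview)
Your proposal is correct and follows essentially the same contradiction argument as the paper: assume a solution with $u_0\in\mathcal{U}_0$ and $u(T,\cdot)\in\mathcal{U}_T$, use~\eqref{con2} together with the chain rule to conclude that $t\mapsto B(t,D^\beta u(t,x))$ is non-increasing on $[0,T]$, and then contradict~\eqref{con1}. The only difference is that you spell out the chain-rule and regularity justifications that the paper leaves implicit.
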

\begin{proof}
The proof is by contradiction. Assume there exists a solution of \eqref{eq1} such that, for some time $T>0$, $u(T,x) \in \mathcal{U}_T$, i.e., $y(T) > \gamma$. Hence, inequality~\eqref{con1} holds.  From \eqref{con2}, it follows that
\begin{equation}
\int_0^T \frac{\mathrm{d}  B}{\mathrm{d}t} \mathrm{d}t =  \int_0^T\bigg((\partial_{ D^\beta u}B) D^\beta F(t,x,D^\alpha u) +\partial_t B \bigg) \mathrm{d}t \le 0.
\end{equation}
Therefore,
\begin{equation}
B(T,D^\beta u(T,x)) - B(0,D^\beta u(0,x)) \le 0,
\end{equation}
which contradicts \eqref{con1}. Hence, $y(T)\le\gamma$. This completes the proof.
\end{proof}

\begin{rem}
The definition of the set $\mathcal{U}_T$ in Theorem~\ref{thm1} can be  different depending on the application. The particular choice for $\mathcal{U}_T$ in~\eqref{eq:unsafeset} is due to the bound estimation problem under study in this research.
\end{rem}

\begin{figure*}[!t]
{
\begin{eqnarray} \label{eee5}
& minimize_{B}  \left[\gamma(T) \right]  & \nonumber \\
&subject~to& \nonumber \\
&B(t,D^\beta u(t,x)) - B(0,D^\beta u_0) > 0, \quad \forall u_0 \in \mathcal{U}_{0},~\forall u \in \mathcal{U}_T,~t=T, \nonumber \\
& \left(\partial_{D^\beta u}B\right)\left(\partial_t D^\beta u\right)+\partial_t B \le 0, \quad \forall u \in \mathcal{U},~\forall t \in [0,T], &
\end{eqnarray}
\hrulefill
\vspace*{4pt}}
\end{figure*}

\begin{rem}
From Theorem~\ref{thm1}, we can compute upper bounds on $y(T)$ by solving the  minimization problem \eqref{eee5},
where $\mathcal{U}_T$ is given by~\eqref{eq:unsafeset}.
\end{rem}


%
%

Thus far, output functionals of  type~\eqref{eqG} with $G_2=0$ were considered.
In some applications, one might be interested in output functionals of type~\eqref{eqG} with $G_1=0$.
For example, referring to the motivating example in Section~\ref{sec:mot}, we might be interested in the following quantity which represents the average temperature of the heated rod for time $T>0$
$$
y(T) = \int_0^T \int_{\Omega} u(t,x) \,\, \mathrm{d}x \mathrm{d}t.
$$
In other words, inequalities of the following type are sought
\begin{equation} \label{eq:of2}
y(T)= \int_{0}^{T} \int_0^1 g(t,x,D^\beta u(t,x))\,\, \mathrm{d}x\mathrm{d}t \le \gamma^*.
\end{equation}
Obtaining bounds for this type of output functionals can also be addressed  as delineated in the next corollary.

\begin{cor}\label{cor1}
Consider the PDE system described by~\eqref{eq1} with boundary conditions~\eqref{eqbc} and initial condition \mbox{$u_0(x)\in \mathcal{U}_0 \subset \mathcal{U} \subseteq \mathcal{
U}_S(Q)$}, where $\mathcal{U}_S(Q)$ is defined in~\eqref{eq:usubspace}. Assume $u \in \mathcal{U} \subseteq \mathcal{U}_S(Q)$. Let
\begin{multline} \label{eq:unsafeset2}
\mathcal{U}_{[0,T]} = \bigg\{ (t,u) \in [0,T]\times \mathcal{U} \mid \\ \int_0^1 g(t,x,D^\beta u(t,x))\,\, \mathrm{d}x > \partial_t \gamma(t)\bigg\} ,\end{multline}
 with $\beta>0$, define the unsafe set. If there exists a barrier functional $B(t,D^\beta u) \in \mathcal{C}^1[t,D^\beta u]$,
 such that
  \begin{multline} \label{con3}
B(t,D^\beta u(t,x)) - B(0,D^\beta u_0(x)) >0, \\ \forall u \in \mathcal{U}_{[0,T]},~\forall u_0 \in \mathcal{U}_0,~\forall t\in[0,T],
\end{multline}
 and \eqref{con2} are satisfied, then it follows that there is no solution $u(t,x)$ of \eqref{eq1} such that $u(0,x)=u_0(x) \in \mathcal{U}_0$ and \mbox{$u(t,x) \in \mathcal{U}_{[0,T]}$} for $t \in [0,T]$. Hence,  it holds that~\mbox{$y(T) \le \gamma^\star$} with $y(T)$ given by \eqref{eq:of2} and $\gamma^\star = \gamma(T)-\gamma(0)$.
\end{cor}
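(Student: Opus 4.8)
The plan is to mimic the proof of Theorem~\ref{thm1} almost verbatim, but track the time-accumulated output functional instead of its value at a single instant. First I would argue by contradiction: suppose a solution $u(t,x)$ of~\eqref{eq1} exists with $u(0,x)=u_0(x)\in\mathcal{U}_0$ and such that the bound fails, i.e.\ $y(T)=\int_0^T\int_0^1 g(t,x,D^\beta u(t,x))\,\mathrm{d}x\,\mathrm{d}t > \gamma^\star = \gamma(T)-\gamma(0)$. Since $\gamma^\star=\int_0^T \partial_t\gamma(t)\,\mathrm{d}t$ by the Fundamental Theorem of Calculus, the strict inequality $\int_0^T\big(\int_0^1 g\,\mathrm{d}x - \partial_t\gamma(t)\big)\mathrm{d}t > 0$ forces the integrand to be strictly positive on a set of positive measure; in particular there exists some $t^\star\in[0,T]$ with $\int_0^1 g(t^\star,x,D^\beta u(t^\star,x))\,\mathrm{d}x > \partial_t\gamma(t^\star)$, so $(t^\star,u(t^\star,\cdot))\in\mathcal{U}_{[0,T]}$, and hence~\eqref{con3} applies at $t=t^\star$, giving $B(t^\star,D^\beta u(t^\star,x)) - B(0,D^\beta u_0(x)) > 0$.

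Next I would invoke~\eqref{con2} exactly as in Theorem~\ref{thm1}: integrating $\frac{\mathrm{d}B}{\mathrm{d}t}$ from $0$ to $t^\star$ and using the chain rule together with the PDE $\partial_t D^\beta u = D^\beta F(t,x,D^\alpha u)$, condition~\eqref{con2} yields
\begin{equation*}
\int_0^{t^\star}\frac{\mathrm{d}B}{\mathrm{d}t}\,\mathrm{d}t = \int_0^{t^\star}\Big((\partial_{D^\beta u}B)\,D^\beta F(t,x,D^\alpha u) + \partial_t B\Big)\,\mathrm{d}t \le 0,
\end{equation*}
so $B(t^\star,D^\beta u(t^\star,x)) - B(0,D^\beta u_0(x)) \le 0$. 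This contradicts the strict inequality obtained above, so no such solution exists; equivalently, every solution starting in $\mathcal{U}_0$ satisfies $\int_0^1 g(t,x,D^\beta u(t,x))\,\mathrm{d}x \le \partial_t\gamma(t)$ for all $t\in[0,T]$, and integrating in time gives $y(T)\le\gamma(T)-\gamma(0)=\gamma^\star$, which is the claim.

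The part that needs a little care — and the only place this differs structurally from Theorem~\ref{thm1} — is the step deducing existence of a witnessing time $t^\star$ with $(t^\star,u(t^\star,\cdot))\in\mathcal{U}_{[0,T]}$ from the violated \emph{integrated} bound. If $\int_0^1 g\,\mathrm{d}x \le \partial_t\gamma(t)$ held for \emph{every} $t\in[0,T]$, then integrating would give $y(T)\le\gamma^\star$ directly, contradicting the assumed violation; hence the set of times where the pointwise-in-$t$ inequality is violated is nonempty, which is all that is needed. One should note that~\eqref{con3} is stated for all $t\in[0,T]$ simultaneously (unlike~\eqref{con1}, which only fixes $t=T$), so once a single violating time $t^\star$ is produced, \eqref{con3} is immediately applicable there. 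Beyond this observation the argument is routine; regularity of $B$ along the solution (so that $\frac{\mathrm{d}B}{\mathrm{d}t}$ is integrable and the chain rule holds) is inherited from the hypotheses $B\in\mathcal{C}^1[t,D^\beta u]$ and the smoothness assumptions on $u$ exactly as in Theorem~\ref{thm1}, so I would simply cite that theorem's proof for the common parts and spell out only the time-integration bookkeeping.
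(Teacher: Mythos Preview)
Your proof is correct and follows essentially the same route as the paper's: both arguments use the barrier conditions~\eqref{con3}--\eqref{con2} to rule out any time $t\in[0,T]$ at which $\int_0^1 g\,\mathrm{d}x > \partial_t\gamma(t)$, and then integrate the resulting pointwise-in-$t$ inequality from $0$ to $T$ to obtain $y(T)\le\gamma(T)-\gamma(0)$. The paper phrases the first step as a direct appeal to Theorem~\ref{thm1} (implicitly applied at each $t$) rather than your explicit contradiction via a witnessing time $t^\star$, but the logic is identical.
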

\begin{proof}
This is a consequence of Theorem~\ref{thm1}. If there exists a function $B(t,D^\beta u(t,x))$ satisfying~\eqref{con3} and~\eqref{con2}, then, from Theorem~\ref{thm1}, we conclude that there is no solution $u(t,x)$ of~\eqref{eq1} satisfying $u(t,x) \in \mathcal{U}_{[0,T]}$ for $t\in[0,T]$. That is, it holds that
\begin{equation} \label{cccc} \int_0^1 g(t,x,D^\beta u(t,x))\,\, \mathrm{d}x \le\partial_t \gamma(t),~\forall t \in [0,T]. \end{equation}
 Integrating both sides of~\eqref{cccc} from $0$ to $T$ yields
\begin{multline}
y(T)=\int_{0}^{T} \int_0^1 g(t,x,D^\beta u(t,x))\,\, \mathrm{d}x \mathrm{d}t \\\le \int_{0}^{T} \partial_t \gamma(t)\,\, \mathrm{d}t  = \gamma(T)-\gamma(0).
\end{multline}
This completes the proof.
\end{proof}

\begin{rem}
We can compute bounds on $\gamma^* = \gamma(T)-\gamma(0)$ via an optimization problem as follows.  If there exists a solution $\gamma^* = \gamma(T)-\gamma(0) $ to the minimization problem \eqref{eee1},
\begin{figure*}[!t]
{
\begin{eqnarray} \label{eee1}
& {minimize}_{B} \left[ \gamma(T)-\gamma(0) \right] & \nonumber \\
&subject~to&\nonumber \\
&B(t,D^\beta u(t,x)) - B(0,D^\beta u_0) > 0, \quad \forall u_0 \in \mathcal{U}_{0},~\forall u \in \mathcal{U}_{[0,T]},~~\forall t \in [0,T], \nonumber \\
& \left(\partial_{D^\beta u}B\right)\left(\partial_t D^\beta u\right)+\partial_t B \le 0, \quad \forall u \in \mathcal{U},~\forall t \in [0,T], &
\end{eqnarray}
\hrulefill
\vspace*{4pt}}
\end{figure*}
 then the following inequality holds
 \begin{equation}
 \int_{0}^{T} \int_0^1 g(t,x,D^\beta u(t,x))\,\, \mathrm{d}x \mathrm{d}t \le  \gamma^*.
 \end{equation}
 \end{rem}


\begin{rem} \label{remNEW}
Notice that in the optimization problem \eqref{eee1}, the \emph{unsafe set} is a problem variable and is parametrized for each time $t\in [0,T]$ according to \eqref{eq:unsafeset2}. The resulting function $B$ may not be a barrier for set 
\begin{equation*} \label{eq:of2NEW}
\mathcal{U} = \left\lbrace u\in \mathcal{U}_S(Q) \mid \int_{0}^{T} \int_0^1 g(t,x,D^\beta u(t,x))\,\, \mathrm{d}x\mathrm{d}t \le \gamma^* \right \rbrace.
\end{equation*}
However, the set described in~\eqref{eq:unsafeset2} can be used to compute the bound as in~\eqref{eq:of2}.
\end{rem}

In order to formulate conditions of Theorem~\ref{thm1} and Corollary~\ref{cor1} in terms of integral inequalities, we consider the following structure for barrier functionals
\begin{equation} \label{eq:bar}
B(t,D^\beta u) = \int_0^1 b(t,x,D^\beta u)\,\, \mathrm{d}x.
\end{equation}
where  $b\in \mathcal{R}[t,x,D^\beta u]$.

\begin{rem} \label{rem133}
The order of partial derivatives of the dependent variables with respect to $x$ in $b(t,x,D^\beta u)$ should be the same as the output functional $y$. This is due to the fact that the barrier functionals serve as barriers in the function space defined by the output functionals. For instance, for the output functional $y(t) = \int_0^1 \left( u^2(t,x) + \left(\partial_x^2 u(t,x) \right)^2 \right)\,\, \mathrm{d}x$, the barrier functional should be of order $2$ in $u$. \end{rem}


\section{Example} \label{sec:exam}

 In this section, we describe how to implement the proposed results using SOS programming by a simple  example:
 
 \begin{itemize}
 \item First, the output functional under study is transformed into the full integral form (Appendix~\ref{sec:full}).
 \item Second, depending on the type of output functionals, the unsafe set is defined as either~\eqref{eq:unsafeset} or~\eqref{eq:unsafeset2}.
\item  Finally, the barrier functional of the appropriate structure is used to find bounds on the output functionals (Remark~\ref{rem133}).
 \end{itemize}
 
 \subsection{SOS Formulation}

Consider~\eqref{eq:exampsys} and output functional~\eqref{eq:motexam2}. Let  \mbox{$f(t,x,u)=f(u)$}  and $k=1$, i.e.
\begin{eqnarray} 
\partial_t u &=& \partial_x^2 u + f(u), \quad x \in [0,1],~t>0 \label{eq:heatexample} \\
y(T)&=&\partial_x u(T,0),~T>0
\end{eqnarray}
subject to  $u(0,x)=u_0(x)$ and $Q \begin{bmatrix} u(t,1) \\ u(t,0) \end{bmatrix}=0$. We are interested in bounding  $y(T)$. Let us transform the output functional to the full integral form using the methods given in Appendix~\ref{sec:full}. From \eqref{e1}, it follows that
$$
y(T) = \frac{-1}{p(0)} \int_0^1  \bigg((\partial_x p(x))\partial_x u(T,x)+p(x)\partial_x^2 u(T,x)\bigg)\,\, \mathrm{d}x,
$$
for some polynomial $p$ such that $p(1)=0$. Setting \mbox{$p(0)=-1$} yields
$$
y(T) =  \int_0^1  \bigg((\partial_x p(x))\partial_x u(T,x)+p(x)\partial_x^2 u(T,x)\bigg)\,\, \mathrm{d}x.
$$
which is a full integral form for the output functional $\partial_x u(T,0)$.
As the next step, we seek certificates showing that no solution belongs to 
\begin{multline}
\mathcal{U}_T = \bigg\{ u \in \mathcal{U}_S(Q) \mid  \int_0^1  \bigg((\partial_x p(x))\partial_x u(T,x)\\+p(x)\partial_x^2 u(T,x) -\gamma \bigg)\,\, \mathrm{d}x > 0 \bigg\} \nonumber\end{multline} at time $T>0$. Applying  Theorem 1 in~\cite{1184414}, for fixed $\gamma$ and $p(x)$, Theorem~\ref{thm1} can be reformulated as follows. If there exist a function $b(t,x,D^1 u)$ such that
\begin{multline} \label{e6}
b(T,x,D^1u(T,x)) - b(0,x,D^1u_0(x)) \\- l_1(x, D^2u(T,x))x(1-x)  \\
- l_2\bigg((\partial_x p(x))\partial_x u(T,x)+p(x)\partial_x^2 u(T,x) -\gamma \bigg)  \\
+ D^2u(T,x) \bar{H}_1(T,x)D^2u(T,x)\in \Sigma\left[x,D^2u(T,x) \right]
\end{multline}
and
\begin{multline} \label{e7}
-\left({\partial_{D^1 u}}b\right) D^1(\partial_x^2 u+f(u)) - \partial_t b
\\- l_3(x,t,D^3 u)t(T-t)  -l_4(x,t,D^3 u)x(1-x) \\ + D^3u \bar{H}_2(t,x)D^3u\in \Sigma\left[x,t,D^3u\right]
\end{multline}
for some $l_1,l_3,l_4 \in \Sigma$, $l_2>0$ and $\{\bar{H}_i\}_{i=1,2}$ as in~\eqref{eq:Hbar}, then \mbox{$y(T)=\partial_x u(T,0) \le \gamma$}. Also, conditions~\eqref{e6} and~\eqref{e7} correspond to~\eqref{con1} and~\eqref{con2}, respectively.  Notice that for $l_2$ fixed and both $\gamma$ and $p(x)$ as variables, SOS inequalities~\eqref{e6} and~\eqref{e7} are  convex   and one can minimize $\gamma$ subject to~\eqref{e6} and~\eqref{e7} which is the same as the minimization problem~\eqref{eee5}. The SOS formulation for Corollary~1 can be carried out similarly.

\subsection{Numerical Results}

The numerical results given in this section was obtained using  SOSTOOLS v. 3.00~\cite{PAVPSP13} and the resultant SDPs were solved using SeDuMi v.1.02~\cite{Stu98}.

Consider PDE~\eqref{eq:heatexample} with $f(u)=\lambda u$
subject to initial conditions \mbox{$u_0(x) = \pi x(1-x)$} and  boundary conditions $u(t,0)=u(t,1)=0$ yielding $Q=\begin{bmatrix} 1 & 0 & 0 & 0 \\ 0 & 0& 1& 0 \end{bmatrix}$. The system is known to be convergent to the null solution just for \mbox{$\lambda\le \pi^2$}~\mbox{\cite[p. 11]{Str04}}. Here,
for illustration purposes, let \mbox{$\lambda = 10\pi^2$}. Notice that convergence of the solutions of the PDE to the null solution  is not required in the proposed method using barrier functionals.


We investigate the bounds on the heat flux emanating from the  boundary $x=0$ at time $T>0$ given by
$$
y(T) = \partial_x u(T,0).
$$
For $T=0.01$, using the proposed method, we obtained the following bound
$$
y(0.01) \le 3.3418.
$$
The actual heat flux from numerical experiments is $y(0.01)=3.212$.  The obtained barrier functional  is given in  Appendix~\ref{sec:app}.
Next,  we consider the following output functional
 \begin{equation}
y(T) =  \int_0^T \partial_x u(\tau,0)\,\, \mathrm{d}\tau,
 \end{equation}
 with $T=0.1$. Using the method presented in Section \ref{sec:barrier}, the obtained upper bound was
 \begin{equation}
 y(0.1) \le 0.5737.
 \end{equation}
 Whereas, the value obtained through numerical simulation and numerical integration is $y(0.1)=0.5656$. The constructed certificates are given in Appendix~\ref{sec:app}.

\section{CONCLUSIONS AND FUTURE WORK} \label{sec:conclusions}

\subsection{Conclusions}

We proposed a methodology to upper-bound output functionals of a class of PDEs by barrier functionals. We transformed different output functionals to the structure suitable for our analyses through splitting the domain and integration-by-parts. For the case of polynomial dependence on both independent and dependent variables, we used SOS programming to construct the barrier functionals by solving SDPs. The proposed method was illustrated with an example.

\subsection{Future Work}

Numerous applications, e.g.  the drag and lift estimation problem described in Section~\ref{sec:intro}, require studying the output functionals of systems defined in two or three dimensional domains.  Therefore, a formulation analogous to the one discussed in Section~\ref{sec:integinq} for integral inequalities over domains of higher dimension  is required.  Furthermore, for some PDEs, the barrier functionals may be conservative. Hence, one may need to adopt special structures for the barrier functionals (see~\cite{AP09} for a special structure for ODEs). Lastly, the application of barrier functionals is not limited to bounding output functionals. Future research can explore other open problems such as safety verification.

%


\bibliography{references}

\begin{thebibliography}{10}
\providecommand{\url}[1]{#1}
\csname url@samestyle\endcsname
\providecommand{\newblock}{\relax}
\providecommand{\bibinfo}[2]{#2}
\providecommand{\BIBentrySTDinterwordspacing}{\spaceskip=0pt\relax}
\providecommand{\BIBentryALTinterwordstretchfactor}{4}
\providecommand{\BIBentryALTinterwordspacing}{\spaceskip=\fontdimen2\font plus
\BIBentryALTinterwordstretchfactor\fontdimen3\font minus
  \fontdimen4\font\relax}
\providecommand{\BIBforeignlanguage}[2]{{%
\expandafter\ifx\csname l@#1\endcsname\relax
\typeout{** WARNING: IEEEtran.bst: No hyphenation pattern has been}%
\typeout{** loaded for the language `#1'. Using the pattern for}%
\typeout{** the default language instead.}%
\else
\language=\csname l@#1\endcsname
\fi
#2}}
\providecommand{\BIBdecl}{\relax}
\BIBdecl

\bibitem{DG95}
C.~R. Doering and J.~D. Gibbon, \emph{Applied Analysis of the {N}avier-{S}tokes
  Equations}, ser. Cambridge Texts in Applied Mathematics.\hskip 1em plus 0.5em
  minus 0.4em\relax Cambridge University Press, 1995, vol.~12.

\bibitem{CA98}
J.-M. Coron and B.~D'Andrea-Novel, ``Stabilization of a rotating body beam
  without damping,'' \emph{Automatic Control, IEEE Transactions on}, vol.~43,
  no.~5, pp. 608--618, May 1998.

\bibitem{6426638}
A.~Gahlawat, E.~Witrant, M.~Peet, and M.~Alamir, ``Bootstrap current
  optimization in {T}okamaks using sum-of-squares polynomials,'' in
  \emph{Decision and Control (CDC), 2012 IEEE 51st Annual Conference on}, Dec
  2012, pp. 4359--4365.

\bibitem{BS94}
I.~Babuska and M.~Suri, ``The p and h-p versions of the finite element method,
  basic principles and properties,'' \emph{SIAM Review}, vol.~36, no.~4, pp.
  pp. 578--632, 1994.

\bibitem{MS98}
P.~Monk and E.~S\"{u}li, ``The adaptive computation of far-field patterns by a
  posteriori error estimation of linear functionals,'' \emph{SIAM J. Numer.
  Anal.}, vol.~36, no.~1, pp. 251--274, 1998.

\bibitem{XPP06}
Z.~Xuan, N.~Par\'es, and J.~Peraire, ``Computing upper and lower bounds for the
  {J}-integral in two-dimensional linear elasticity,'' \emph{Computer Methods
  in Applied Mechanics and Engineering}, vol. 195, pp. 430 -- 443, 2006.

\bibitem{MPP99}
L.~Machiels, J.~Peraire, and A.~Patera, ``Output bound approximations for
  partial differential equations; application to the incompressible
  {N}avier-{S}tokes equations,'' in \emph{Industrial and Environmental
  Applications of Direct and Large-Eddy Simulation}, ser. Lecture Notes in
  Physics, S.~Biringen, H.~\"{O}rs, A.~Tezel, and J.~Ferziger, Eds.\hskip 1em
  plus 0.5em minus 0.4em\relax Springer Berlin Heidelberg, 1999, vol. 529, pp.
  93--108.

\bibitem{V02}
D.~A. Venditti, ``Grid adaptation for functional outputs of compressible flow
  simulations,'' Ph.D. dissertation, Massachusetts Institute of Technology,
  Cambridge, MA, 2002.

\bibitem{PP98}
J.~Peraire and A.~T. Patera, ``Bounds for linear-functional outputs of coercive
  partial differential equations: linear indicators and adaptive refinement,''
  in \emph{Advances in adaptive computational methods in mechanics}, ser.
  Studies in Applied Mechanics, P.~Ladev\'eze and J.~T. Oden, Eds.\hskip 1em
  plus 0.5em minus 0.4em\relax Oxford, UK: Elsevier Science Ltd., 1998,
  vol.~47, pp. 199--217.

\bibitem{XLP03}
Z.~Xuan, K.~Lee, and J.~Peraire, ``A posteriori output bound for partial
  differential equations based on elemental error bound computing,'' ser.
  Lecture Notes in Computer Science.\hskip 1em plus 0.5em minus 0.4em\relax
  Springer Berlin Heidelberg, 2003, vol. 2667, pp. 1035--1044.

\bibitem{Pierce2004769}
N.~A. Pierce and M.~B. Giles, ``Adjoint and defect error bounding and
  correction for functional estimates,'' \emph{Journal of Computational
  Physics}, vol. 200, no.~2, pp. 769 -- 794, 2004.

\bibitem{Bertsimas2006}
D.~Bertsimas and C.~Caramanis, ``Bounds on linear {PDEs} via semidefinite
  optimization,'' \emph{Math. Program.}, vol. 108, no.~1, pp. 135--158, Aug.
  2006.

\bibitem{Sauer-budge_computingbounds}
A.~M. Sauer-budge, J.~Bonet, A.~Huerta, and J.~Peraire, ``Computing bounds for
  linear functionals of exact weak solutions to {P}oisson's equation,''
  \emph{SIAM J. Numer. Anal}, vol.~42, pp. 1610--1630, 2004.

\bibitem{Pars2006406}
N.~Par\'{e}s, J.~Bonet, A.~Huerta, and J.~Peraire, ``The computation of bounds
  for linear-functional outputs of weak solutions to the two-dimensional
  elasticity equations,'' \emph{Computer Methods in Applied Mechanics and
  Engineering}, vol. 195, no.~4, pp. 406 -- 429, 2006.

\bibitem{P06}
S.~Prajna, ``Barrier certificates for nonlinear model validation,''
  \emph{Automatica}, vol.~42, no.~1, pp. 117 -- 126, 2006.

\bibitem{PJP07}
S.~Prajna, A.~Jadbabaie, and G.~Pappas, ``A framework for worst-case and
  stochastic safety verification using barrier certificates,'' \emph{Automatic
  Control, IEEE Transactions on}, vol.~52, no.~8, pp. 1415--1428, Aug 2007.

\bibitem{GSAGLP07}
S.~Glavaski, D.~Subramanian, K.~Ariyur, R.~Ghosh, N.~Lamba, and
  A.~Papachristodoulou, ``A nonlinear hybrid life support system: Dynamic
  modeling, control design, and safety verification,'' \emph{Control Systems
  Technology, IEEE Transactions on}, vol.~15, no.~6, pp. 1003--1017, Nov 2007.

\bibitem{EFLPP06}
H.~El-Samad, M.~Fazel, X.~Liu, A.~Papachristodoulou, and S.~Prajna,
  ``Stochastic reachability analysis in complex biological networks,'' in
  \emph{American Control Conference, 2006}, June 2006, pp. pp. 4748--4753.

\bibitem{SWP12}
C.~Sloth, R.~Wisniewski, and G.~Pappas, ``On the existence of compositional
  barrier certificates,'' in \emph{Decision and Control (CDC), 2012 IEEE 51st
  Annual Conference on}, Dec 2012, pp. 4580--4585.

\bibitem{WC13}
R.~Wisniewski and C.~Sloth, ``Converse barrier certificate theorem,'' in
  \emph{Decision and Control (CDC), 2013 IEEE 52nd Annual Conference on}, Dec
  2013, pp. 4713--4718.

\bibitem{VAP14}
G.~Valmorbida, M.~Ahmadi, and A.~Papachristodoulou, ``Semi-definite programming
  and functional inequalities for distributed parameter systems,'' in
  \emph{53rd {C}onference on {D}ecision and {C}ontrol}, Los Angeles, CA, 2014.

\bibitem{AVP14}
M.~Ahmadi, G.~Valmorbida, and A.~Papachristodoulou, ``Input-output analysis of
  distributed parameter systems using convex optimization,'' in \emph{53rd
  {C}onference on {D}ecision and {C}ontrol}, Los Angeles, CA, 2014.

\bibitem{Par00}
P.~Parrilo, ``Structured semidefinite programs and semialgebraic geometry
  methods in robustness and optimization,'' Ph.D. dissertation, California
  Institute of Technology, 2000.

\bibitem{CLR95}
M.~Choi, T.~Lam, and B.~Reznick, ``Sums of squares of real polynomials,'' in
  \emph{Symposia in {P}ure {M}athematics}, vol.~58, no.~2, 1995, pp. 103--126.

\bibitem{CTVG99}
G.~Chesi, A.~Tesi, A.~Vicino, and R.~Genesio, ``On convexification of some
  minimum distance problems,'' in \emph{5th {E}uropean {C}ontrol {C}onference},
  Karlsruhe, Germany, 1999.

\bibitem{1184414}
A.~Papachristodoulou and S.~Prajna, ``On the construction of {L}yapunov
  functions using the sum of squares decomposition,'' in \emph{Decision and
  Control, 2002, Proceedings of the 41st IEEE Conference on}, vol.~3, Dec 2002,
  pp. 3482--3487 vol.3.

\bibitem{PAVPSP13}
A.~Papachristodoulou, J.~Anderson, G.~Valmorbida, S.~Prajna, P.~Seiler, and
  P.~Parrilo, ``{SOSTOOLS}: Sum of squares optimization toolbox for {MATLAB}
  {V}3.00,'' 2013.

\bibitem{Stu98}
J.~F. Sturm, ``Using {SeDuMi} 1.02, a {MATLAB} toolbox for optimization over
  symmetric cones,'' 1998.

\bibitem{Str04}
B.~Straughan, \emph{The Energy Method, Stability, and Nonlinear Convection},
  2nd~ed., ser. Applied Mathematical Sciences.\hskip 1em plus 0.5em minus
  0.4em\relax Berlin: Springer-Verlag, 2004, vol.~91.

\bibitem{AP09}
J.~Anderson and A.~Papachristodoulou, ``On validation and invalidation of
  biological models,'' \emph{BMC Bioinformatics}, vol. 132, no.~10, 2009.

\end{thebibliography}
\bibliographystyle{IEEEtran}


\appendix
{
 \renewcommand{\theequation}{A.\arabic{equation}}
  \setcounter{equation}{0}  

\subsection{Transformation to full integral form}\label{sec:full}

\subsubsection{Boundaries}
Consider functional~\eqref{eg1} with~$g_2=0$ and~$x \in \{0,1\}$, i.e.
\begin{equation}
{y}(t) = {g}\left(t,0,D^\alpha u(t,0)\right),~x_0\in \partial\Omega.
\end{equation}
 For some $p \in \mathcal{C}^1(\Omega)$ satisfying $p(1)=0$, we obtain
\begin{align}
p(0) {g}\left(t,0,D^\alpha u(t,0)\right) =  - \int_0^1 \partial_x (pg) \,\,\mathrm{d}x .
\end{align}
Therefore,
\begin{multline} \label{e1}
y(t) = {g}\left(t,0,D^\alpha u(t,0)\right)  \\= \frac{-1}{p(0)} \int_0^1  \left((\partial_x p){g}+p(\partial_x g) \right)\,\, \mathrm{d}x.
\end{multline}

In addition, if the functional was defined on the boundary $x=1$, assuming $p(0)=0$, we  obtain
\begin{multline} \label{e2}
y(t) = {g}\left(t,1,D^\alpha u(t,1)\right) \\= \frac{1}{p(1)}\int_0^1 \left((\partial_x p){g}+p(\partial_x g)\right) \,\, \mathrm{d}x.
\end{multline}
Notice that, by fixing the values of $p(0)$ and $p(1)$ in~\eqref{e1} and~\eqref{e2}, respectively, we can use equations~\eqref{e1} and~\eqref{e2} to study functionals evaluated at the boundaries using integral inequalities in the full integral form.

\subsubsection{Single Points Inside the Domain}

At this point, consider  functional~\eqref{eg1} with \mbox{$g_2=0$}, i.e.
\begin{equation} \label{eqfun2}
y(t)={g}\left(t,x_0,D^\beta u(t,x_0)\right),~x_0\in \Omega.
\end{equation}
 We split the domain into two subsets $\Omega_1 = (0,x_0]$ and \mbox{$\Omega_2=[x_0,1)$}.
 Then, PDE~\eqref{eq1} can be represented by the following coupled PDEs
\begin{equation*}
\partial_t u=
\begin{cases}
F(t,x,D^\alpha u), & x \in \Omega_1 \\
F(t,x,D^\alpha u), & x \in \Omega_2
\end{cases}
\end{equation*}
subject to $D^{\alpha-1} u(t,x_0)=D^{\alpha-1}u(t,x_0)$ and~\eqref{eqbc}. Using appropriate change of variables, we obtain
\begin{equation*}
\begin{cases}
\partial_t u_1 = F_1(t,x,D^\alpha u_1), & x \in \Omega \\
\partial_t u_2 = F_2(t,x,D^\alpha u_2), & x \in \Omega
\end{cases}
\end{equation*}
subject to $\frac{1}{x_0^{\alpha-1}}D^{\alpha-1} u_1(t,1) = \frac{1}{(1-x_0)^{\alpha-1}}D^{\alpha-1} u_2(t,0)$\footnote{
To simplify the notation, we define
$$\frac{1}{x_0^{\alpha-1}}D^{\alpha-1} u = \left(u,~\frac{1}{x_0} \partial_x u, \ldots, \frac{1}{x_0^{\alpha-1}} \partial_x^{\alpha-1} u \right)^\prime. \nonumber$$
} 
and
\begin{equation*} \label{eqbc3}
Q \begin{bmatrix} \frac{1}{x_0^{\alpha-1}}D^{\alpha-1}u_2(t,1) \\ \frac{1}{(1-x_0)^{\alpha-1}}D^{\alpha-1}u_1(t,0) \end{bmatrix} = 0,
\end{equation*}
where  $Q$ is as in~\eqref{eqbc}, $F_1 = F(t,x,\frac{1}{x_0^\beta}D^\beta u_1)$, and $F_2 = F(t,x,\frac{1}{(1-x_0)^\beta} D^\beta u_2)$. Then, functional~\eqref{eqfun2} can be changed to either of the following
\begin{eqnarray*}
y(t)&=&{g}\left(t,x_0,\frac{1}{x_0^\beta}D^\beta u_1(t,1)\right), \\
y(t)&=&{g}\left(t,x_0,\frac{1}{(1-x_0)^\beta}D^\beta u_2(t,0)\right),
\end{eqnarray*}
and the method proposed for points at the boundaries described in previous subsection can be used.

%

\subsubsection{Subsets Inside the Domain} \label{sec:subsets}

 Consider functional~\eqref{eg1} with~$g_1=0$, i.e.
\begin{equation}\label{eqfun3}
{y}(t)=\int_{\tilde{\Omega}} {g}\left(t,x,D^\beta u(t,x)\right)\,\, \mathrm{d}x,
\end{equation}
where $\tilde{\Omega}=[x_1,x_2] \subset \Omega$.
Similar to the previous section, we split the domain into three subsets $\Omega_1 = (0,x_1]$, $\Omega_2 = [x_1,x_2]$, and $\Omega_3=[x_2,1)$.
Then, PDE  \eqref{eq1} can be rewritten as
\begin{equation*}
\partial_t u=
\begin{cases}
F(t,x,D^\alpha u), & x \in \Omega_1 \\
F(t,x,D^\alpha u), & x \in \Omega_2 \\
F(t,x,D^\alpha u), & x \in \Omega_3,
\end{cases}
\end{equation*}
subject to $D^{\alpha-1} u(t,x_1)=D^{\alpha-1} u(t,x_1)$, $D^{\alpha-1} u(t,x_2)=D^{\alpha-1} u(t,x_2)$, and~\eqref{eqbc}.  With appropriate change of variables, we have
\begin{equation*}
\begin{cases}
\partial_t u_1 = F_1(t,x,D^\alpha u_1), & x \in \Omega \\
\partial_t u_2 = F_2(t,x,D^\alpha u_2), & x \in \Omega \\
\partial_t u_3 = F_3(t,x,D^\alpha u_3), & x \in \Omega
\end{cases}
\end{equation*}
subject to $\frac{1}{(x_1)^{\alpha-1}}D^{\alpha-1} u_1(t,1) = \frac{1}{(x_2-x_1)^{\alpha-1}}D^{\alpha-1} u_2(t,0)$  and $\frac{1}{(x_2-x_1)^{\alpha-1}} D^{\alpha-1} u_2(t,1) = \frac{1}{(1-x_2)^{\alpha-1}}  D^{\alpha-1} u_3(t,0)$ in addition to
\begin{equation*} \label{eqbc2}
Q \begin{bmatrix} \frac{1}{(1-x_2)^{\alpha-1}} D^{\alpha-1}u_3(t,1) \\ \frac{1}{(x_1)^{\alpha-1}} D^{\alpha-1}u_1(t,0) \end{bmatrix} = 0,
\end{equation*}
where  $Q$ is the same matrix as the one in~\eqref{eqbc}, \mbox{$F_1 = F(t,x,\frac{1}{x_1^\beta}D^\beta u_1)$},  $F_2 = F(t,x,\frac{1}{(x_2-x_1)^\beta} D^\beta u_2)$, and $F_3 = F(t,x,\frac{1}{(1-x_2)^\beta} D^\beta u_3)$. Finally, functional \eqref{eqfun3} can be converted to the following full integral form which is suitable for the integral inequalities
\begin{multline*}
{y}(t)= \\(x_2-x_1)\int_{0}^{1} {g}\left(t,x,\frac{1}{(x_2-x_1)^\beta}D^\beta u_2(t,x)\right)\,\, \mathrm{d}x.
\end{multline*}

\subsection{Obtained Certificates} \label{sec:app}
The obtained barrier functional for bounding \mbox{$y(0.01) = \partial_x u(0.01,0)$}:
\begin{eqnarray}
B(t,D^1u) = \int_0^1 b(t,x,D^1u) \,\, \mathrm{d}x \label{appbar}
\end{eqnarray}
\begin{multline}
b(t,x,D^1u) = - 7.1441t^2u^2 + 1.7154t^2u\partial_x u - 20.228t^2u \nonumber \\
 - 7.5293t^2(\partial_x u)^2 - 3.0302t^2\partial_x u + 84.477t^2 \nonumber \\
 + 5.306txu^2
  + 4.439tu^2 - 11.394txu\partial_x u  \nonumber \\ + 4.0763tu\partial_x u + 11.385tux
   + 9.753tu \nonumber \\- 0.7447tx(\partial_x u)^2 + 0.55552t(\partial_x u)^2 - 4.2529tx\partial_x u \nonumber \\
    + 1.3549t\partial_x u - 42.631tx - 28.656t \nonumber \\- 6.9887x^2u^2 + 5.7104xu^2
   - 2.3317u^2 \nonumber \\ - 0.21259x^2u\partial_x u + 2.3274xu\partial_x u - 1.7012u\partial_x u \nonumber \\
    + 5.7105x^2u - 6.7309xu - 0.23359u \nonumber \\- 0.3866x^2(\partial_x u)^2
     + 0.326x(\partial_x u)^2 - 0.048049(\partial_x u)^2 \nonumber \\ - 0.01152x^2\partial_x u + 0.062023x\partial_x u      - 0.020951\partial_x u,
\end{multline}\normalsize
with polynomial $p(x)$ in \eqref{e1} computed as
\begin{align*}
p(x)=0.9999x -0.9999.
\end{align*}

The obtained certificates for bounding
\mbox{$y(0.1) =  \int_0^{0.1} \partial_x u(\tau,0)\,\, \mathrm{d}\tau$}:

 \begin{multline*}
 \gamma(t) = 964.11t^7 + 6.7729t^6 + 66.924t^5 \\+ 32.375t^4 + 100.79t^3 - 4.5509t^2 + 5.7891t,
\end{multline*}
 and~\eqref{appbar} with
 \begin{multline}
 b(t,x,D^1u) = - 1.6454t^2u^2 + 0.37053t^2u\partial_x u - 2.14t^2u  \nonumber \\ - 1.2514t^2(\partial_x u)^2
  + 0.15851t^2\partial_x u + 3.8517t^2 \nonumber \\
  + 2.6005txu^2 + 1.672tu^2
  - 2.5321tu(\partial_x u)^2 \nonumber \\ +0.69396tu\partial_x u + 3.6274txu + 1.3629tu \nonumber \\
   - 0.13896tx(\partial_x u)^2 + 0.24091t(\partial_x u)^2 - 0.18622tx\partial_x u \nonumber \\+ 0.016255t\partial_x u
    - 7.0307tx - 2.1521t \nonumber \\ -6.3404x^2u^2 + 4.1574xu^2     - 1.9985u^2  \nonumber \\+ 0.19901x^2u\partial_x u + 0.54298xu\partial_x u - 0.69336u\partial_x u \nonumber \\
      + 0.75643x^2u - 1.1887xu - 0.048215u \nonumber \\
      - 0.20306x^2(\partial_x u)^2
       + 0.16273x(\partial_x u)^2 - 0.021415(\partial_x u)^2 \nonumber \\ - 0.0023234x^2\partial_x u + 0.014826x\partial_x u
        - 0.0032264\partial_x u.
 \end{multline}\normalsize
 with polynomial $p(x)$ in \eqref{e1} computed as
\begin{align*}
p(x)=0.5056x^2 + 0.4944x - 1.0.
\end{align*}

\end{document}